\theoremstyle{plain}
\newtheorem{thm}{Theorem}[section]
\newtheorem{prop}[thm]{Proposition}
\theoremstyle{definition}
\newtheorem{rmk}[thm]{Remark}
\newtheorem{claim}[thm]{Claim}
\newcommand{\F}{{\mathcal F}}
\newcommand{\Q}{{\mathbb Q}}
\author{Christopher D. Hacon}
\address{Department of Mathematics, University of Utah, 155 South 1400 East,
Salt Lake City, UT 48112-0090, USA}
\email{hacon@math.utah.edu}
\begin{document}
\title{Singularities of pluri-theta divisors in Char $p>0$}
\begin{abstract}
  We show that if $(X,\Theta )$ is a PPAV over an algebraically closed
 field of characteristic $p>0$ and $D\in |m\Theta|$, then $(X,\frac 1 m D )$ is a limit of strongly $F$-regular pairs and in particular ${\rm mult}_x(D)\leq m\cdot \dim X$ for any $x\in X$.
\end{abstract}

\maketitle
\section{introduction} Let $(X,\Theta )$ be a principally polarized abelian variety (PPAV) so that $X$ is a connected projective algebraic group and $\Theta$ is an ample divisor with $h^0(X,\mathcal O _X (\Theta ))=1$. 
The geometry of $X$ is often studied in terms of the singularities of the theta divisor $\Theta$ (or more genereally of the singularities of pluri-theta divisors i.e. divisors in $|m\Theta|$).
For PPAVs over $\mathbb C$ there are a number of well known results saying that the singularities of pluri-theta divisors are mild (see for example \cite{Kollar95}, \cite{EL97} and \cite{Hacon99}).
According to a result of Ein and Lazarsfeld (cf. \cite[3.5]{EL97}), if $D\in |m\Theta|$, then $(X,\frac 1 m D )$ is log canonical. Since $X$ is smooth, this is equivalent to saying that $(X,\frac {1-\epsilon} m D )$ is Kawamata log terminal for any $0<\epsilon <1$.
The purpose of this brief note it to prove the analogous result in characteristic $p>0$.
\begin{thm}\label{t-main} Let $(X,\Theta )$ be a PPAV 
over an algebraically closed field of characteristic $p>0$. If $D\in |m\Theta|$, then $(X,\frac {1-\epsilon} m D)$ is strongly $F$-regular for any rational number $0<\epsilon <1$.
% such that the denominator of $\frac{1-\epsilon}m$ and $p$ are coprime. 
\end{thm}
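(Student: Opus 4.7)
My plan is to adapt the characteristic-zero argument of Ein--Lazarsfeld by replacing multiplier ideals with test ideals and Nadel vanishing with a generic-vanishing-type statement for abelian varieties in characteristic $p$.

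Set $\Delta := \frac{1-\epsilon}{m}D$, so that $\Delta \sim_{\Q}(1-\epsilon)\Theta$ and $\Theta-\Delta \equiv \epsilon\Theta$ is ample. Strong $F$-regularity of $(X,\Delta)$ is equivalent to the (big) test ideal $\tau(X,\Delta)$ equaling $\mathcal{O}_X$. Suppose for contradiction that $\tau := \tau(X,\Delta) \subsetneq \mathcal{O}_X$ with cosupport $Z \subsetneq X$.

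The key step is to combine the short exact sequence
\[
0 \to \tau \otimes \mathcal{O}_X(\Theta_a) \to \mathcal{O}_X(\Theta_a) \to (\mathcal{O}_X/\tau) \otimes \mathcal{O}_X(\Theta_a) \to 0,
\]
where $\Theta_a := t_a^* \Theta$ for $a \in X$ (still with $h^0(X, \Theta_a) = 1$), with a characteristic-$p$ analog of Nadel vanishing. Specifically, I would argue that $\tau \otimes \mathcal{O}_X(\Theta_a)$ is a GV-sheaf on $X$ after a suitable $\mathrm{Pic}^0$-twist, so that GV/IT$_0$ generation on abelian varieties forces the restriction $H^0(X, \Theta_a) \to H^0(X, (\mathcal{O}_X/\tau) \otimes \mathcal{O}_X(\Theta_a))$ to be surjective for generic $a$. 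Varying $a$, and boosting positivity by pulling back along the étale multiplication-by-$n$ isogenies $[n] \colon X \to X$ for $n$ coprime to $p$ (under which $[n]^*\Theta \equiv n^2\Theta$ and étale pullback preserves strong $F$-regularity), one should be able to produce more independent sections supported on $Z$ than the bound $h^0(\Theta_a) = 1$ allows, yielding the desired contradiction. The multiplicity bound $\mathrm{mult}_x(D) \le m\cdot \dim X$ in the abstract then follows by the usual log-discrepancy estimate for $F$-regular pairs, letting $\epsilon \to 0$.

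The main obstacle is the generic-vanishing step in characteristic $p$. Nadel vanishing is unavailable, so the substitute must come from the generic vanishing theory for coherent sheaves on abelian varieties in characteristic $p$ (in the spirit of Hacon--Patakfalvi), applied here to test ideals and their Frobenius pushforwards. Controlling the $F$-module/Cartier-module structure of $\tau(X,\Delta)$ under Frobenius pushforward and $\mathrm{Pic}^0$-translation is the key technical difficulty; in particular, handling non-ordinary abelian varieties, where $X$ is not Frobenius split, is what makes the positive-characteristic case substantially harder than its characteristic-zero counterpart.
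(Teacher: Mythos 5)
Your framework is the right one, but the step you flag as ``the main obstacle'' --- establishing a generic-vanishing/IT$_0$ property for the test ideal twisted by $\Theta$ --- is precisely the heart of the proof, and you leave it unresolved. There is no direct argument that $\tau(X,\Delta)\otimes\mathcal O_X(\Theta)$ is a GV-sheaf; the paper's point is that one should not try to prove this for the (non-F-pure or test) ideal itself, but for its Frobenius-theoretic ``resolution.'' Concretely: choosing $\epsilon$ so that $p$ does not divide the index of $\Delta$, the trace maps give a surjection
$$F^{ne}_*\bigl(\sigma(X,\Delta)\cdot \mathcal O_X((1-p^{ne})(K_X+\Delta))\bigr)\otimes\mathcal O_X(\Theta)\otimes P_{\hat x}\ \longrightarrow\ \sigma(X,\Delta)\otimes\mathcal O_X(\Theta)\otimes P_{\hat x},$$
and by the projection formula the higher cohomology of the left-hand side is computed from $\mathcal O_X((1-p^{ne})(K_X+\Delta))\otimes F^{ne*}(\mathcal O_X(\Theta)\otimes P_{\hat x})$. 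Since $K_X=0$ and $\Delta\sim_{\Q}(1-\epsilon)\Theta$, this twist is $\Q$-linearly equivalent to $(1+(p^{ne}-1)\epsilon)\Theta$ plus a torsion twist, which becomes arbitrarily ample as $n\to\infty$. Serre vanishing (together with \cite[2.9]{PP03}) then gives the IT$_0$ statement for the Frobenius pushforward, and Proposition \ref{p1} applied to that sheaf --- not to $\tau\otimes\mathcal O_X(\Theta)$ --- produces, for general $\hat x$, a section of $\sigma(X,\Delta)\otimes\mathcal O_X(\Theta)\otimes P_{\hat x}$ nonvanishing at a general point; hence a general translate of $\Theta$ contains the cosupport, which is absurd unless it is empty. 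This is where $\epsilon>0$ is used and where the positivity is ``boosted'': it comes from $(p^{ne}-1)\epsilon\Theta$ under iterated Frobenius, not from pulling back along prime-to-$p$ isogenies $[n]$, which merely rescale $\Theta$ and $D$ simultaneously and gain nothing.

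Two smaller issues. First, you work directly with $\tau(X,\Delta)$; the paper instead proves sharp $F$-purity ($\sigma(X,\Delta)=\mathcal O_X$) for a sequence $\epsilon_i\to 0$ with $p$ prime to the index of $\frac{1-\epsilon_i}{m}D$, and then invokes \cite[2.2]{TW04} (sharply $F$-pure plus $X$ strongly $F$-regular implies $(X,(1-\epsilon)\Delta)$ strongly $F$-regular) to conclude. Your version would need to address the index-divisibility hypothesis required to even define the trace maps $\phi_\Delta$. Second, your concluding ``count of independent sections against $h^0(\Theta_a)=1$'' is more complicated than necessary: once one knows $H^0(\sigma(X,\Delta)\otimes\mathcal O_X(\Theta)\otimes P_{\hat x})\ne 0$ for general $\hat x$, the cosupport lies in a general translate of $\Theta$, and general translates of an ample divisor have empty common intersection. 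As written, your proposal correctly identifies the architecture but does not contain the mechanism that makes the characteristic-$p$ argument work.
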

\begin{rmk}In particular it follows that $(X,\frac {1}mD)$ is log canonical in the sense that all log-discrepancies are $\geq 0$ and therefore ${\rm mult }_x(D)\leq m\cdot \dim X$ for any $x\in X$.
Note that in characteristic $0$ it is known that if ${\rm mult} _x(D)= m\cdot \dim X$, then $X$ is a product of elliptic curves \cite{EL97} and \cite{Hacon99}. We do not know if the analogous result holds in characteristic $p>0$.
By \cite[4.1]{Hernandez11} it follows that  $(X,D)$ is $F$-pure, and if $p$ and $m$ are coprime, then $(X,D)$ is sharply $F$-pure.
\end{rmk}
\begin{rmk} Since in characteristic $0$ it is known that irreducible $\Theta$ divisors are normal with rational
singularities, it is natural to wonder if over an algebraically closed field of  characteristic $p>0$,    irreducible $\Theta$ divisors are normal with $F$-rational
singularities. Note that a related result appears in \cite{BBE07}.\end{rmk}
The characteristic $0$ argument of Ein and Lazarsfeld relies on the theory of multiplier ideal sheaves, Kawamata-Viehweg vanishing and the Fourier-Mukai functor. In characteristic $p>0$, the theory of Fourier-Mukai functors still applies and multiplier ideal sheaves can be replaced by test ideals.
However, there is no good substitute for Kawamata-Viehweg vanishing (which is known to fail in this context). Instead, inspired by some ideas contained in \cite{Schwede11}, we use the  "generic vanishing results" from \cite{CH03}, \cite{Hacon04} and \cite{PP08} to show that $h^0(X,\sigma (X,\frac {1-\epsilon}mD)\otimes \mathcal O _X(\Theta )\otimes P_{\hat x})>0$ for a general $\hat x\in \widehat X$.
But then a general translate of $\Theta$ vanishes along the cosupport of $\sigma (X,\frac {1-\epsilon}mD)$.
This is only possible if $\sigma (X,\frac {1-\epsilon}mD)=\mathcal O _X$ and so $(X,\frac {1-\epsilon}mD)$ is strongly $F$-regular and \eqref{t-main} follows.

\hskip.3cm

\noindent{\bf Acknowledgments.}
  The author was partially supported by NSF research grant DMS
  0757897. He would  also like to thank Mihnea Popa, Karl Schwede and Kevin Tucker
   for some useful discussions.
\section{preliminaries}
Throughout this paper we work over an algebraically closed field $k$ of characteristic $p>0$.
Recall that the ring homomorphism $F:k\to k$ defined by $F(x)=x^p$ endows $k$ with a non-trivial $k$-module structure. 
\subsection{Test ideals}\label{ss-t} Here we recall the definition of test ideals and some related results that will be needed in this paper. We refer the reader to \cite{BST11} and \cite{Schwede11} (and the references therein) for a more complete treatment.
Let $(X, \Delta =\sum d_iD_i)$ be a log pair so that $X$ is a normal variety and $\Delta \geq 0$ is a $\Q$-divisor such that $K_X+\Delta$ is $\Q$-Cartier.
Let $F:X\to X$ be the Frobenius morphism and for any integer $e>0$, let $F^e$ be its $e$-th iterate.
The {\bf parameter test submodule} of $(X,\Delta )$ denoted by $\tau(\omega _X,\Delta )$ is locally defined as the unique smallest non-zero $\mathcal O _X$-submodule of $\omega _X$ such that $\phi (F^e_*M)\subset M$ for any $e>0$ and any $\phi \in {\rm Hom}_{\mathcal O _X}(F^e_*\omega _X(\lceil (p^e-1)\Delta \rceil),\omega _X)$. The {\bf test ideal} $\tau (X,\Delta )$ is defined by $\tau (\omega _X, K_X+\Delta )$.
It is known that $\tau (X,\Delta )\subset \mathcal O _X$ is an ideal sheaf such that $$\tau  (X,\Delta +A)=\tau (X,\Delta )\otimes \mathcal O_X(-A),\ \ \  {\rm and} \ \ \ \tau  (X,\Delta +eA)=\tau (X,\Delta )$$ for any Cartier divisor $A$ and any rational number $0\leq e\ll 1$. We also have that test ideals are contained in multiplier ideals in the sense that if $\pi :Y\to X$ is a proper birational morphism, then $\tau (X,\Delta )\subset \pi_* \mathcal O_Y(K_{Y}-\lfloor \pi ^*(K_X+\Delta )\rfloor )$. (Recall that in characteristic $0$, if $\pi$ is a log resolution, then the multiplier ideal is defined by $\mathcal J(X,\Delta )=\pi_* \mathcal O_Y(K_{Y}-\lfloor \pi ^*(K_X+\Delta )\rfloor )$.)
In particular,
if $X$ is a smooth variety and ${\rm mult} _x (\Delta )\geq \dim X$, then $\tau (X,\Delta )\subset \frak m _x$ where $\frak m _x$ is the maximal ideal of $x$ in $X$.

Suppose now that $p$ does not divide the index of $K_X+\Delta$ so that $(p^e-1)(K_X+\Delta)$ is Cartier for some integer $e>0$. Let $\mathcal L_{e,\Delta }=\mathcal O_X((1-p^e)(K_X+\Delta ))$. There is a canonically determined (up to unit) homomorphism of line bundles $\phi _\Delta :F^e_*\mathcal L_{e,\Delta }\to \mathcal O_X$. We have that $\tau (X,\Delta )$ is  the smallest non-zero ideal $J\subset \mathcal O _X$ such that 
$\phi _\Delta (F^e_*(J\cdot \mathcal L_{e,\Delta }))=J$. Similarly, we define $\sigma (X,\Delta )$ to be the largest ideal $J\subset \mathcal O _X$ such that $\phi _\Delta (F^e_*(J\cdot \mathcal L_{e,\Delta }))=J$.
By definition $(X,\Delta )$ is {\bf strongly $F$-regular} if $\tau (X,\Delta )=\mathcal O _X$, and {\bf sharply $F$-pure} if $\sigma (X,\Delta )=\mathcal O _X$.
By \cite[2.2]{TW04}, we have that  1) if $(X,\Delta )$ is strongly $F$-regular then it is also sharply $F$-pure, and 2) if $(X,\Delta )$ is sharply $F$-pure and $X$ is strongly $F$-regular, then  $(X,(1-\epsilon)\Delta )$ is strongly $F$-regular for any $0<\epsilon <1$.
\subsection{Abelian varieties and the Fourier-Mukai transform} Here we recall some facts about the Fourier-Mukai transform introduced in \cite{Mukai81}.
Let $\widehat X$ be the dual abelian variety and $P$ be the normalized Poincar\'e line bundle on $X\times \widehat X$. We denote by $\mathbf R \hat S: \mathbf D(X)\to \mathbf D(\widehat X)$ the usual Fourier-Mukai functor given by $\mathbf R \hat S(\F )=\mathbf R {p_{\widehat X}}_*(p_X^*\F \otimes P)$. There is a corresponding functor  $\mathbf R  S: \mathbf D(\widehat X)\to \mathbf D( X)$ such that $$\mathbf R  S\circ \mathbf R \hat S=(-1_X)^*[-g]\qquad {\rm and }\qquad \mathbf R  \hat S\circ \mathbf R S=(-1_{\widehat X})^*[-g].$$ 

Let $A$ be any ample line bundle on $\widehat X$, then $\mathbf R ^0S(A)=\mathbf R S(A)$ is a vector bundle on $X$ of rank $h^0(A)$ which we denote by $\widehat A$.
For any $x\in X$, let $t_x:X\to X$ be the translation by $x$ and let $\phi _A:\widehat X \to X$ be the isogeny determined by $\phi _A (\hat x)=t^*_{\hat x}A-A$, then $\phi _A^*(\widehat{A})=\bigoplus _{h^0(A)}A^\vee$. 

If $(X,\Theta )$ is a PPAV, then $\phi _\Theta: (X, \Theta)\to (\widehat X,\widehat \Theta =\phi _\Theta (\Theta))$ is an isomorphism of PPAVs. If $A=\mathcal O _X(m\widehat \Theta )$ for some positive integer $m>0$, then $\phi _A:\widehat X \to X$ can be identified with $m_X:X\to X $ (multiplication by $m$) and so it has degree $m^{2\dim X}$ and $\phi _A^*\Theta \equiv m^2\widehat \Theta$. (Note that the above notation is customary, but somewhat confusing as $\mathcal O _{\widehat X}(-\widehat \Theta) ={\mathbf R}^0\hat S (\mathcal O _X (\Theta ))$.)

We will need the following result.
\begin{prop}\label{p1}Let $\F$ be a non-zero coherent sheaf on $X$ such that $H^i(\F \otimes P_{\hat x})=0$ for all $i>0$ and all $\hat {x}\in \widehat X$ (where for any $\hat x\in \hat X$, we let $P_{\hat x}=P|_{X\times \hat x}$). If $\F\to k(x)$ is a surjective morphism for some $x\in X$, then the induced map $H^0(\F \otimes P_{\hat x})\to H^0(k(x)\otimes P_{\hat x})\cong k(x)$ is surjective for general $\hat {x}\in \widehat X$.
\end{prop}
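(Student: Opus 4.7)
The plan is to apply the Fourier--Mukai functor $\mathbf R\hat S$ directly to the given surjection $\mathcal F\to k(x)$. Since $\mathcal F$ satisfies $H^i(\mathcal F\otimes P_{\hat x})=0$ for all $i>0$ and all $\hat x\in \widehat X$ (the IT$_0$ condition), cohomology and base change imply that $\hat{\mathcal F}:=\mathbf R\hat S(\mathcal F)$ is a locally free sheaf on $\widehat X$, concentrated in degree zero, whose fiber at $\hat x$ is naturally $H^0(\mathcal F\otimes P_{\hat x})$. On the other hand a direct computation using $p_X^*k(x)\otimes P = P|_{\{x\}\times \widehat X}$ yields $\mathbf R\hat S(k(x))=P_x$, the line bundle on $\widehat X$ obtained by restricting the Poincar\'e bundle to $\{x\}\times \widehat X$, whose fiber at $\hat x$ is $H^0(k(x)\otimes P_{\hat x})\cong k(x)$. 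Thus the given surjection induces a morphism of coherent sheaves $\phi\colon \hat{\mathcal F}\to P_x$ on $\widehat X$, and by functoriality of the base change isomorphisms the fiber of $\phi$ at $\hat x$ is precisely the evaluation map whose surjectivity is to be established for general $\hat x$.

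Because $\widehat X$ is an integral variety and $P_x$ is a line bundle, every nonzero coherent subsheaf of $P_x$ has the form $\mathcal I\cdot P_x$ for a nonzero ideal sheaf $\mathcal I\subset \mathcal O_{\widehat X}$. Consequently, if $\phi$ is nonzero then the image of $\phi$ agrees with $P_x$ away from the proper closed subscheme $V(\mathcal I)$, so $\phi$ is surjective (and hence surjective on fibers) on the complementary non-empty open subset of $\widehat X$. The proof therefore reduces to the single assertion that $\phi$ is not the zero morphism.

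To show $\phi\neq 0$, I would invoke the Mukai inversion formula $\mathbf R S\circ \mathbf R \hat S=(-1_X)^*[-g]$, which in particular exhibits $\mathbf R\hat S$ as an equivalence of derived categories and hence a functor that reflects zero morphisms. Since the original map $\mathcal F\to k(x)$ is nonzero (being surjective onto a nonzero sheaf), its image $\phi=\mathbf R\hat S(\mathcal F\to k(x))$ in $\mathbf D(\widehat X)$ is nonzero, and the argument is complete. The only non-formal step I expect to require care is the identification of the fiber of $\phi$ at $\hat x$ with the evaluation map $H^0(\mathcal F\otimes P_{\hat x})\to H^0(k(x)\otimes P_{\hat x})$; this reduces to verifying that the base change isomorphisms for the IT$_0$ sheaves $\mathcal F$ and $k(x)$ are compatible with the natural transformation induced by $\mathcal F\to k(x)$, which is standard but should be recorded explicitly.
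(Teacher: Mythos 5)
Your proof is correct and follows essentially the same route as the paper: apply the Fourier--Mukai transform to $\F\to k(x)$, use the IT$_0$ hypothesis and cohomology and base change to identify the resulting map $\hat\F\to P_x$ fiberwise with the evaluation maps, and conclude generic surjectivity from the fact that a nonzero map to a line bundle on an integral variety is generically surjective. Your justification that $\phi\neq 0$ (the equivalence $\mathbf R\hat S$ reflects zero morphisms, via Mukai inversion) is the same mechanism the paper invokes, stated slightly more explicitly.
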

\begin{proof} (Cf. \cite[2.3]{CH03} or \cite{PP08}.)
By cohomology and base change, one sees that $\hat \F=\mathbf R ^0S(\F )=\mathbf R S(\F )$ is a sheaf and since $\mathbf R \hat S (\hat \F)=(-1_X)^*\F[-g]\ne 0$, we have that $\hat \F\ne 0$.
Let $P_x=P|_{x\times \hat X}$, then $P_x=\mathbf R ^0S(k(x))=\mathbf R S(k(x))$ and the homomorphism
$\phi:\hat \F \to P_x$ is non-zero. However, as $P_x$ is a line bundle (and hence torsion free of rank $1$), it follows that $\phi$ is generically surjective. The proposition now follows since for any $\hat x\in \hat X$, the corresponding fiber of $\hat \F$ (resp. $\mathbf R S^0(k(x))$) is $H^0(\F\otimes P_{\hat x})$ (resp. $H^0(k(x)\otimes P_{\hat x})=H^0(k(x))$).
\end{proof}
\section{main result}

\begin{proof} [Proof of \eqref{t-main}]

Let $0<\epsilon <1$ be a rational number such that the index of $\Delta =\frac {1-\epsilon}mD$  is not divisible by $p$. We will show that $(X,\Delta )$ is sharply $F$-pure. Note that for any given $m$, we may find a sequence of $\epsilon _i$ such that the index of $\Delta =\frac {1-\epsilon_i}mD$  is not divisible by $p$ and $0=\lim _{i\to \infty}\epsilon _i$.  
Since $X$ is regular, it follows by what we have observed in Subsection \ref{ss-t}, that $(X,(1-\epsilon )\Delta)$ is strongly $F$-regular for all rational numbers $0<\epsilon \leq 1$.

We now fix $e>0$ such that $(p^e-1)\Delta $ is Cartier. For any $n> 0$ we have that $$\phi _\Delta ^{n}\left(F^{ne}_*(\sigma (X,\Delta)\cdot \mathcal O _X((1-p^{ne})(K_X+\Delta)))\right)=\sigma (X,\Delta).$$
We will show the following.
\begin{claim}\label{c-s}  For any sufficiently big integer $n\gg 0$,  we have $$H^i(X,F_*^{ne} (\sigma (X,\Delta )\cdot \mathcal O_X((1-p^{ne})(K_X+\Delta)))\otimes \mathcal O _X (\Theta)\otimes P_{\hat {x}})=0$$ for all $i>0$ and all $\hat x\in \widehat X$.
\end{claim}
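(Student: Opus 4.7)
\noindent\textit{Proof plan.} The plan is to reduce the claimed vanishing, via a short exact sequence cutting out the cosupport of $\sigma(X,\Delta)$, to two independent vanishings: one on $X$, handled by Mumford's vanishing for ample line bundles on abelian varieties, and one on the subscheme it defines, handled by the uniform Fujita vanishing theorem of Keeler.

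Set $\mathcal{L}_{ne,\Delta} := \mathcal{O}_X((1-p^{ne})(K_X+\Delta)) = \mathcal{O}_X((1-p^{ne})\Delta)$ (using $K_X=0$), and let $Z \subset X$ be the closed subscheme defined by the ideal $\sigma(X,\Delta)$. Tensoring the short exact sequence
\[
0 \to \sigma(X,\Delta) \to \mathcal{O}_X \to \mathcal{O}_Z \to 0
\]
by the line bundle $\mathcal{L}_{ne,\Delta}$, applying the exact functor $F^{ne}_*$, and twisting by $\mathcal{O}_X(\Theta)\otimes P_{\hat x}$ produces a short exact sequence whose leftmost term is the sheaf appearing in the claim. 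It therefore suffices to show that the middle and right-hand terms have vanishing $H^i$ for $i>0$ and every $\hat x \in \widehat X$.

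For the middle term, the projection formula for the finite morphism $F^{ne}$, together with the fact that its pullback raises each line bundle to its $p^{ne}$-th tensor power (so $(F^{ne})^*\mathcal{O}_X(\Theta) = \mathcal{O}_X(p^{ne}\Theta)$ and $(F^{ne})^*P_{\hat x} = P_{p^{ne}\hat x}$), gives
\[
H^i\bigl(X,\, F^{ne}_*(\mathcal{L}_{ne,\Delta}) \otimes \mathcal{O}_X(\Theta) \otimes P_{\hat x}\bigr) \;\cong\; H^i\bigl(X,\, M_n \otimes P_{p^{ne}\hat x}\bigr),
\]
where $M_n := \mathcal{L}_{ne,\Delta}\otimes \mathcal{O}_X(p^{ne}\Theta)$ is numerically equivalent to $(1-\epsilon+\epsilon p^{ne})\Theta$, hence ample. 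Since ampleness on an abelian variety depends only on the numerical class, $M_n\otimes P_{p^{ne}\hat x}$ is ample for every $\hat x$, and Mumford's vanishing kills all higher cohomology.

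For the right-hand term, the same projection formula identifies its cohomology with $H^i\bigl(Z,\, (M_n \otimes P_{p^{ne}\hat x})|_Z\bigr)$. Here I would apply Keeler's uniform form of Fujita vanishing to the coherent sheaf $\mathcal{O}_Z$ and the ample line bundle $\mathcal{O}_X(\Theta)$: it supplies an integer $n_0$ such that $H^i(X,\mathcal{O}_Z \otimes N)=0$ for all $i>0$ whenever $N-n_0\Theta$ is nef. Because the coefficient of $\Theta$ in the numerical class of $M_n$ grows like $\epsilon p^{ne}$ while $P_{p^{ne}\hat x}$ is numerically trivial, this nefness condition is met uniformly in $\hat x$ once $n\gg 0$, and the desired vanishing follows from the long exact sequence in cohomology.

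The main obstacle is securing vanishing that is uniform across all $\hat x \in \widehat X$; ordinary Serre vanishing would supply it only for each $\hat x$ separately. Keeler's strengthening of Fujita vanishing (on $Z$) and Mumford's vanishing (on $X$) are exactly the tools adapted to this uniformity, and their combined use is what makes the reduction succeed.
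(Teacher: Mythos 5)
Your reduction via the short exact sequence is the weak point. Writing the resulting sequence as $0\to A\to B\to C\to 0$, where $A$ is the sheaf in the Claim, $B=F^{ne}_*(\mathcal L_{ne,\Delta})\otimes\mathcal O_X(\Theta)\otimes P_{\hat x}$ and $C$ is the term supported on $Z$, the long exact sequence reads
$$H^0(B)\to H^0(C)\to H^1(A)\to H^1(B),$$
so knowing $H^i(B)=H^i(C)=0$ for all $i>0$ only yields $H^i(A)=0$ for $i\geq 2$; for $i=1$ you additionally need surjectivity of $H^0(B)\to H^0(C)$, i.e.\ (after your projection-formula identification) of the restriction map $H^0(X,M_n\otimes P_{p^{ne}\hat x})\to H^0(Z,(M_n\otimes P_{p^{ne}\hat x})|_Z)$. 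That surjectivity is precisely equivalent to the vanishing $H^1(A)=0$ you are trying to prove, so the reduction is circular at $i=1$. The implication you assert ("it suffices that $B$ and $C$ have no higher cohomology") is false in general: on an elliptic curve with $Z=\{P,Q\}$ and $L=\mathcal O_X(P+Q)$ one has $H^{i}(L)=H^{i}(\mathcal O_Z\otimes L)=0$ for $i>0$, yet $H^1(I_Z\otimes L)=H^1(\mathcal O_X)\ne 0$.

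The fix is immediate and removes the short exact sequence altogether: Keeler's uniform Fujita vanishing applies to an arbitrary coherent sheaf, so apply it directly to the ideal sheaf $\sigma(X,\Delta)$ rather than to $\mathcal O_Z$. This gives $n_0$ such that $H^i(X,\sigma(X,\Delta)\otimes\mathcal O_X(n_0\Theta)\otimes N)=0$ for $i>0$ and every nef line bundle $N$; taking $N=M_n\otimes P_{p^{ne}\hat x}\otimes\mathcal O_X(-n_0\Theta)$, which is numerically equivalent to $((p^{ne}-1)\epsilon+1-n_0)\Theta$ and hence nef for $n\gg 0$, together with your (correct) projection-formula step, proves the Claim for all $\hat x$ at once. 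The rest of your computation is sound: $F^{ne\,*}$ raises line bundles to the $p^{ne}$-th power, $M_n$ is ample, and Mumford's vanishing does handle the middle term. Note that the repaired argument genuinely differs from the paper's, which obtains uniformity in $\hat x$ from relative Serre vanishing over $\widehat X$ applied to $p_{\widehat X}^*(\sigma(X,\Delta))\otimes(F^{ne}\times\mathrm{id}_{\widehat X})^*P$ and then trades the auxiliary twist $t\Theta$ against the positivity of $\mathcal O_X((1-p^{ne})(K_X+\Delta)+p^{ne}\Theta)$ via the Pareschi--Popa lemma \cite[2.9]{PP03}; your Fujita-based route is a legitimate and arguably more self-contained alternative once the $i=1$ gap is closed.
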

Granting the claim for the time being, we will now conclude the proof of the theorem.
Let $x\in X$ be a general point, so that in particular $x$ is not contained in the co-support of $\sigma (X,\Delta )$. We have a surjection $$F^{ne}_*(\sigma (X,\Delta)\cdot \mathcal O _X((1-p^{ne})(K_X+\Delta)))\otimes \mathcal O _X(\Theta)\to k(x),$$
which factors through $\sigma (X,\Delta)\otimes \mathcal O _X(\Theta)\to k(x).$
By \eqref{p1} and \eqref{c-s}, we have that $$H^0(F^{ne}_*(\sigma (X,\Delta)\cdot \mathcal O _X((1-p^{ne})(K_X+\Delta)))\otimes \mathcal O _X (\Theta)\otimes P_{\hat x})\to k(x)$$ is surjective for general $\hat x\in \widehat X$.
Since this map factors through $H^0(\sigma (X,\Delta)\otimes \mathcal O _X(\Theta)\otimes P_{\hat {x}})$, we have that the induced homomorphism $H^0(\sigma (X,\Delta)\otimes \mathcal O _X(\Theta)\otimes P_{\hat {x}})\to k(x)$ is surjective. In particular $H^0(\sigma (X,\Delta)\otimes \mathcal O _X(\Theta)\otimes P_{\hat {x}})\ne 0$, 
i.e. the corresponding translate of $\Theta$ vanishes along the co-support of $\sigma (X,\Delta)$.
But then this co-support is empty so that $\sigma (X,\Delta)=\mathcal O _X$. 
\end{proof}
\begin{proof}[Proof of Claim \ref{c-s}.] 
It suffices to show that for all $i>0$ we have $$H^i( {X}, \sigma (X,\Delta )\cdot \mathcal O_X((1-p^{ne})(K_X+\Delta))\otimes {F^{ne}}^*(\mathcal O _X (\Theta)\otimes P_{\hat x}))=0.$$
By Serre vanishing (applied to the projective morphism $p_{\widehat X}: X\times \widehat X\to \widehat X$ and the coherent sheaf $p _{\widehat X}^*(\sigma (X,\Delta )\cdot  (F^{ne}\times {\rm id}_{\hat X})^*P$), we may fix $t>0$ such that $$H^i( X, \sigma (X,\Delta )\cdot  {F^{ne}}^* P_{\hat x}\otimes \mathcal O _X (t\Theta))=0$$ for $i>0$.
By \cite[2.9]{PP03}, it suffices to show that $$H^i( {X}, \mathcal O_{X}((1-p^{ne})(K_X+\Delta)+(p^{ne}-t)\Theta) )=0$$
for $i>0$. 
By assumption $(1-p^{ne})(K_X+\Delta)+(p^{ne}-t)\Theta \sim _{\mathbb Q}((p^{ne}-1)\epsilon +1-t)\Theta$. 
The claim now follows since $(p^{ne}-1)\epsilon +1-t >0$ for $ n\gg 0$.
\end{proof}

\enddocument
\end
\begin{thebibliography}{ELMNPM}

\bibitem[BBE07]{BBE07}
P. Berthelot and S. Bloch and H. Esnault, {\it On Witt vector cohomology for singular varieties}, Compositio math. 143 (2007), 363-392. 
\bibitem[BST11]{BST11}
M. Blickle, K. Schwede, K. Tucker, {\it $F$-singularities via alterations.} ArXiv:1107.3807v2
\bibitem[CH03]{CH03}
J. A. Chen and C. D. Hacon, {\it Linear series on irregular varieties.}  Proceedings of the symposium on Algebraic Geometry in East Asia 143--153. World Scientific (2002).
\bibitem[EL97]{EL97}
L. Ein and R. Lazarsfeld, {\it Singularities of theta divisors, and the birational geometry of irregular varieties.}
J. Amer. Math. Soc. {\bf 10} (1997), 243-258.
\bibitem[Hacon99]{Hacon99}
C. Hacon, {\it Divisors on principally polarized abelian varieties}. Compositio Mathematica {\bf 119}: 321--329, 1999.
\bibitem[Hacon04]{Hacon04}
C. Hacon, {\it A derived category approach to generic vanishing.}  J. reine angew. Math. {\bf 575} (2004), 173--187.
\bibitem[Kollar95]{Kollar95} J. Koll\'ar, {\it Shafarevich maps and automorphisc forms.} Princeton Univ. Press, 1995.
\bibitem[Hernandez11]{Hernandez11}
D. Hernandez, {\it F-purity of hypersurfaces.} Preprint. (2011)
\bibitem[Mukai81]{Mukai81}
S. Mukai, {\it Duality between $D(X)$ and $D(\hat X)$ with its application to Picard sheaves.} Nagoya Math. J. Vol. 81 (1981), 153--175.
\bibitem[PP03]{PP03}
G. Pareschi and M. Popa, {\it Regularity on abelian varieties I} J. Amer. Math. Soc. 16 (2003), no.2, 285-302 
\bibitem[PP08]{PP08}
G. Pareschi and M. Popa, {\it Regularity on abelian varieties III: relationship with Generic Vanishing and applications.} arXiv:0802.1021
%\bibitem[Schwede08]{Schwede08} K. Schwede, {\it Generalized test ideals, sharp $F$-purity, and sharp test elements.} Math. Res. Lett. 15 (2008), no. 6, 1251--1261
\bibitem[Schwede11]{Schwede11}
K. Schwede, {\it A canonical linear system associated to adjoint divisors in characteristic $p>0$.} arXiv:1107.3833v2
\bibitem[TW04]{TW04}
S. Takagi and K-I Watanabe, {\it On F-pure thresholds.} Journl of Algebra 282 (2004) 278-297
\end{thebibliography}
